\documentclass[11pt,a4paper]{article}

\usepackage[paper,margin=3cm]{paulmathdocs}
\usepackage{accents}

\newcommand{\thickbar}[1]{\accentset{\rule{0.45em}{0.6pt}}{#1}}

\title{Guaranteeing Hausdorff--Gromov--Hausdorff Equality for Metric Graphs}
\newcommand{\institution}{Karlsruher Institut für Technologie}
\author{Paul Schott\\\institution}
\date{}

\begin{document}

\maketitle

\begin{abstract}
    We prove the equality of Hausdorff and Gromov--Hausdorff distance \linebreak\mbox{\(\dGH(G, U) = \dH(G, U)\)} for a metric graph $G$ and a subset $U \subseteq G$, given that the Hausdorff distance between $G$ and $U$ is attained not just near the leaves of $G$ and the Gromov-Hausdorff distance is not too big compared to the graph's systole.
\end{abstract}

\section{Introduction}
    After its introduction in 1975 by David Edwards \cite{EdwardsStructureOfSuperspace} and, independently, in 1981 by Mikhail Gromov \cite{GromovStructuresMetriques}, the Gromov--Hausdorff distance was used as a canonical way of discussing convergence of metric spaces. For this, it is sufficient to merely bound the Gromov--Hausdorff distance from above or below. 
    However, especially in recent years, explicit calculation of Gromov-Hausdorff distances between spaces has become more important, with new applications arising, for example, in Computational Geometry \cite{SchmiedlComputationalAspects}, \cite{MemoliShapeComparison} but also in fields like Data Science \cite{CarlssonMemoliHierarchicalClustering}. 
    While canonical, the Gromov--Hausdorff distance is generally very hard to compute. 
    For abstractly described spaces like unit spheres of different dimensions, in \cite{LimMemoliSmithSpheres} the authors have managed to bound the Gromov--Hausdorff distance from both sides, sometimes even managing to bound by the same value.
    However, even for the general case of metric trees, approximating it within a factor of less than $3$ is NP-hard \cite{AgarwalComputingGHMetricTrees}. 
    In contrast, the closely related Hausdorff distance, given a surrounding space, can be calculated much more efficiently. The Hausdorff distance between two sets embedded in a common space is an upper bound for the Gromov-Hausdorff distance. However, as Adams et al. showed with a construction on metric graphs (see Theorem 3.1 in \cite{AdamsMajhiManinVirkZavaMetricGraphs}), it can become arbitrarily large, while the Gromov-Hausdorff distance becomes arbitrarily small, even in the setting of metric trees. 
    \begin{example} \label{Example:ConstructionForHMuchBiggerThanGH}
        Consider, for $\varepsilon > 0$ and $n \in \bbN$, $n \geq 2$, a tree $T$ resulting from the gluing of $n$ intervals of lengths $1 + \varepsilon, 1 + 2 \varepsilon, \ldots,1 + n \varepsilon$.
        \\ In it, we can isometrically embed a similar space $U_0$, resulting from the gluing of $n$ intervals with edge-lengths $1, 1 + \varepsilon, \ldots, 1 + (n-1) \varepsilon$, in two different ways, as indicated in \cref{fig:T-X0}.
        \\ The embedding on the left shows that $\dGH(T, U_0) \leq \dH(T, U_1) = \varepsilon$, while the embedding on the right shows $\dH(T, U_0) = n \cdot \varepsilon$.
        \begin{figure}[H]
            \centering
            \begin{subfigure}[t]{0.48\textwidth}
                \centering
                \begin{tikzpicture}[scale=1.5]
                    \begin{scope}[transparency group, cyan, opacity=0.3]
                        \draw[line width=7.5pt] (0, 0) -- (  0:1.0cm);
                        \draw[line width=7.5pt] (0, 0) -- ( 15:1.1cm);
                        \draw[line width=7.5pt] (0, 0) -- ( 30:1.2cm);
                        \draw[line width=7.5pt] (0, 0) -- ( 45:1.3cm);
                        \draw[line width=7.5pt] (0, 0) -- ( 60:1.4cm);
                        \draw[line width=7.5pt] (0, 0) -- ( 75:1.5cm);
                        \draw[line width=7.5pt] (0, 0) -- ( 90:1.6cm);
                        \draw[line width=7.5pt] (0, 0) -- (105:1.7cm);
                        \draw[line width=7.5pt] (0, 0) -- (120:1.8cm);
                        \draw[line width=7.5pt] (0, 0) -- (135:1.9cm);
                        \fill (0, 0) circle (2.5pt);
                    \end{scope}
                    \begin{scope}[thick]
                        \draw (0, 0) -- (  0:1.1cm);
                        \draw (0, 0) -- ( 15:1.2cm);
                        \draw (0, 0) -- ( 30:1.3cm);
                        \draw (0, 0) -- ( 45:1.4cm);
                        \draw (0, 0) -- ( 60:1.5cm);
                        \draw (0, 0) -- ( 75:1.6cm);
                        \draw (0, 0) -- ( 90:1.7cm);
                        \draw (0, 0) -- (105:1.8cm);
                        \draw (0, 0) -- (120:1.9cm);
                        \draw (0, 0) -- (135:2.0cm);
                    \end{scope}
                    % Each guide joins a black endpoint to the equally distant cyan endpoint on the neighboring spoke.
                    \begin{scope}[densely dotted, thin]
                        \draw (-5:1.0cm)
                            arc[start angle=-5, end angle=5, radius=1.0cm];
                        \foreach \angle/\radius in {
                            0/1.1, 15/1.2, 30/1.3, 45/1.4, 60/1.5,
                            75/1.6, 90/1.7, 105/1.8, 120/1.9
                        } {
                            \draw (\angle-5:\radius cm)
                                arc[start angle={\angle-5}, end angle={\angle+20},
                                    radius=\radius cm];
                        }
                        \draw (130:2.0cm)
                            arc[start angle=130, end angle=140, radius=2.0cm];
                    \end{scope}
                    % Fine radial dimensions mark the epsilon separation of every pair of neighboring arcs.
                    \begin{scope}[magenta, line width=0.2pt]
                        \foreach \angle/\inner/\middle/\outer in {
                            -2.5/1.0/1.05/1.1, 12.5/1.1/1.15/1.2,
                            27.5/1.2/1.25/1.3, 42.5/1.3/1.35/1.4,
                            57.5/1.4/1.45/1.5, 72.5/1.5/1.55/1.6,
                            87.5/1.6/1.65/1.7, 102.5/1.7/1.75/1.8,
                            117.5/1.8/1.85/1.9, 132.5/1.9/1.95/2.0
                        } {
                            \draw (\angle:\inner cm) -- (\angle:\outer cm);
                            \node[font=\tiny] at
                                ($ (\angle:\middle cm)
                                + (\angle-90:0.07cm) $)
                                {$\varepsilon$};
                            \draw
                                ($ (\angle:\inner cm) + (\angle+90:0.025cm) $)
                                --
                                ($ (\angle:\inner cm) + (\angle-90:0.025cm) $);
                            \draw
                                ($ (\angle:\outer cm) + (\angle+90:0.025cm) $)
                                --
                                ($ (\angle:\outer cm) + (\angle-90:0.025cm) $);
                        }
                    \end{scope}
                \end{tikzpicture}
                \caption{$T$ and \colorbox{cyan!30}{an isometric copy $U_1$ of $U_0$}}
                \label{fig:T-X0-copy}
            \end{subfigure}
           \hfill
            \begin{subfigure}[t]{0.48\textwidth}
                \centering
                \begin{tikzpicture}[scale=1.5]
                    \begin{scope}[transparency group, cyan, opacity=0.3]
                        \draw[line width=7.5pt] (0, 0) -- (  0:1.1cm);
                        \draw[line width=7.5pt] (0, 0) -- ( 15:1.2cm);
                        \draw[line width=7.5pt] (0, 0) -- ( 30:1.3cm);
                        \draw[line width=7.5pt] (0, 0) -- ( 45:1.4cm);
                        \draw[line width=7.5pt] (0, 0) -- ( 60:1.5cm);
                        \draw[line width=7.5pt] (0, 0) -- ( 75:1.6cm);
                        \draw[line width=7.5pt] (0, 0) -- ( 90:1.7cm);
                        \draw[line width=7.5pt] (0, 0) -- (105:1.8cm);
                        \draw[line width=7.5pt] (0, 0) -- (120:1.9cm);
                        \draw[line width=7.5pt] (0, 0) -- (135:1.0cm);
                        \fill (0, 0) circle (2.5pt);
                    \end{scope}
                    \begin{scope}[thick]
                        \draw (0, 0) -- (  0:1.1cm);
                        \draw (0, 0) -- ( 15:1.2cm);
                        \draw (0, 0) -- ( 30:1.3cm);
                        \draw (0, 0) -- ( 45:1.4cm);
                        \draw (0, 0) -- ( 60:1.5cm);
                        \draw (0, 0) -- ( 75:1.6cm);
                        \draw (0, 0) -- ( 90:1.7cm);
                        \draw (0, 0) -- (105:1.8cm);
                        \draw (0, 0) -- (120:1.9cm);
                        \draw (0, 0) -- (135:2.0cm);
                    \end{scope}
                    \fill[orange] (135:2.0cm) circle (1.5pt)
                        node[above left, text=orange] {$p_0$};
                \end{tikzpicture}
                \caption{$T$, its subset \colorbox{cyan!30}{$U_0$} and the extremal point $p_0$}
                \label{fig:T-X0-subset}
            \end{subfigure}
            
             \caption{Two views of $T$ and a version of $U_0$.}
            \label{fig:T-X0}
        \end{figure}
    \end{example}
    
    This naturally leads one to the question of whether there is a meaningful condition that allows to conclude equality of these two distances. In this case, the shared value is easy to compute, yet canonical.
    \\ This question was posed by Adams, Majhi, Manin, Virk and Zava in \cite{AdamsMajhiManinVirkZavaMetricGraphs}, along with a first answer to it.
    \\ In this paper, we improve upon the result given.

    To state our main theorem, we introduce the following notion:
    
    \begin{restatable}{definition}{extremalPointsDefinition}
        \label{definition:extremal-points}
        Let $X$ be a metric space and $A \subseteq X$ be a nonempty subset.
        \\ We define the \emph{set of extremal points of $A$ within $X$} as
        $$ \operatorname{Ext}(X, A) := \left\{x \in X : \dist_A(x) = \dH(A, X) \right\} . $$
    \end{restatable}

    We call a point in a metric graph a leaf if its degree within this graph is $1$ (also see \cref{subsubsec:metric-graphs}). We also denote by $\sys(G)$ the systole of a graph, defined as the minimal length of a non contractible closed curve in $G$. For a graph with no such curves (i. e. acyclic graphs) we define the systole to be infinite.

    \begin{theorem}
        \label{theorem:introduction-main}
        Let $G$ be a finite connected metric graph and let $U \subseteq G$ be a subset.
        \\ If $\operatorname{Ext}(G, U)$ is not fully contained within the set of leaves and $\dGH(G, U) < \frac{1}{12} \sys(G)$, then $\dGH(G, U) = \dH(G, U)$.
    \end{theorem}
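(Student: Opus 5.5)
The inequality $\dGH(G,U) \le \dH(G,U)$ is automatic, since $U$ sits inside $G$. So the plan is to prove $\dH(G,U) \le \dGH(G,U)$. Fix a point $p \in \operatorname{Ext}(G,U)$ that is not a leaf, and write $r := \dH(G,U) = \dist_U(p)$. Suppose, for contradiction, that $\dGH(G,U) < r$. Choose $\delta$ with $\dGH(G,U) < \delta < \min\{r, \frac{1}{12}\sys(G)\}$. Then there is a correspondence $R \subseteq G \times U$ with distortion less than $2\delta$. The goal is to derive a contradiction by transporting a neighbourhood of $p$ through $R$ into $U$, exploiting the fact that $U$ misses the open ball $B(p,r)$ while $p$ has degree at least $2$.

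The first step is local geometry near $p$. Since $p$ is not a leaf, it has at least two branches. Because $\delta < \frac{1}{12}\sys(G)$, a ball of radius of order $6\delta$ around any point of $G$ is a tree. Concretely, it is a star if the ball contains no vertex besides its centre, or more generally it contains no cycle. On such a tree, distances are computed by the unique tree geodesic. From $p$ I will pick two points $a,b$ in different branches with $d(a,p)=d(b,p)=t$, where $t$ is of order $\delta$ (say $t = 2\delta$, shortened if the branch is shorter). This gives $d(a,b) = 2t$, with $p$ the unique midpoint. I will take corresponding points $a',p',b' \in U$ with $(a,a'),(p,p'),(b,b') \in R$. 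Distortion then gives $|d(a',b') - 2t| < 2\delta$, $|d(a',p') - t| < 2\delta$, and similarly for $b'$. All of $a',p',b'$ lie within about $4\delta + 2t$ of $p'$, hence in a tree-like ball.

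The key step is to show that the configuration $a',p',b'$ forces $U$ to come close to $p$, or that it contradicts the extremality of $p$. The idea is to use the whole geodesic segment $[a,b]$ together with the corresponding points $x' \in U$ for each $x \in [a,b]$. These points form a $2\delta$-chain in $U$ that stays, up to $2\delta$, at the right distances from $a'$ and $b'$. Since the ambient ball is a tree, such a chain must pass near the tree geodesic from $a'$ to $b'$. Comparing with the position of $p'$ in $G$ then pins down how $U$ sits near $p'$.

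This still does not directly involve $p$ itself, because $R$ is an abstract correspondence and need not fix $p$. This is the main obstacle. To handle it I expect to use the extremal point more globally. Points of $G$ near $p$ are at distance about $r$ from $U$, while every point of $U$ is at distance $0$ from $U$. I will try to exploit an asymmetry: $U$ contains no point of degree at least $2$ in the region at distance less than $r$ from $p$, whereas the correspondence would have to reproduce, inside $U$, a branching "cross" of size comparable to $r$. One approach is to iterate the construction, moving from $p$ toward $U$ along branches, to show that $R$ is close to an isometric embedding on balls of radius $\sim 3\delta$. Its image would then have to cover a ball around some point of $U$ whose preimage contains $p$ with the wrong distance to $U$.

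Alternatively, I will try first to prove the auxiliary claim that $R$ moves points by at most a controlled amount, $d(x,x') \lesssim \delta$, on the relevant region. The systole bound should give this by ruling out a "rotation" around a cycle, which is presumably exactly where the constant $\frac{1}{12}$ enters. Once this claim holds, we get $\dist_U(p) \le d(p,p') \lesssim \delta$, and a sharpened version of the estimate should yield $\dist_U(p) \le \delta < r$, which contradicts $r = \dist_U(p)$.
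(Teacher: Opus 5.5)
Your proposal has a genuine gap at the step you yourself flag as the main obstacle, and the routes you suggest around it cannot work.

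\textbf{Nothing local near $p$ is contradictory.} The data you extract from $R$ on $[a,b]$ are three points with distances $t,t,2t$ up to $2\delta$, plus a $2\delta$-chain between them. Any segment of $U$ of length about $2t$ realizes these data. For instance, take $G=[-L,L]$, $U=[-L,-r]\cup[r,L]$ and $p=0$. A near-isometry can carry $[a,b]$ onto a piece of $[r,L]$ far from $p$. The facts $U\cap B(p,r)=\emptyset$ (which is all your ``no point of degree $\ge 2$ near $p$'' amounts to) and $\dist_U(p)=r$ are properties of the embedding $U\subseteq G$, not intrinsic properties of the metric space $U$. A correspondence does not see them.

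\textbf{The displacement claim fails too.} The auxiliary claim $d(x,x')\lesssim\delta$ cannot follow from small distortion plus the systole bound. For $U=G$ a circle, rotation by half the circumference has distortion $0$ and moves every point by $\sys(G)/2$, so the systole does not ``rule out rotations.'' Moreover, a bound $d(p,p')\le C\delta$ with $C>1$ would not contradict $\delta<r$, since $\delta$ may have to be taken arbitrarily close to $r$.

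\textbf{What is missing} is a global invariant of $U$, stable under small Gromov--Hausdorff perturbation, that the hole $B(p,r)$ destroys. ``Not a leaf'' enters only through a dichotomy: removing $p$ either disconnects $G$ or lowers $\beta_1$. The paper treats the two cases separately.

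\emph{Cut-point case.} Every component of $G\setminus\{p\}$ meets $U$, since otherwise moving $p$ into a $U$-free component would increase $\dist_U$. So $U$ splits into two nonempty pieces at mutual distance at least $2r$, i.e.\ $\Sep(U)\ge 2r$, while $\Sep(G)=0$. Since $\Sep$ is $2$-Lipschitz for $\dGH$, this gives $\dGH(G,U)\ge r$ with no systole assumption. Your chain idea, applied to a chain joining the two pieces rather than to $[a,b]$, is essentially this argument.

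\emph{Non-cut-point case.} The paper takes $f:G\to U$ with $\dis(f)\le 2\dGH(G,U)$ and forms an $\varepsilon$-continuization $\thickbar{f}:G\to G$. This map is continuous, has distortion at most $4r'$, and has image in $B_{r'}(U)$ for some $r'<r$, so the image misses $p$. Hence $\beta_1(\thickbar{f}(G))<\beta_1(G)$, and $H_1(\thickbar{f})$ is not injective. On the other hand, a continuized right inverse $\thickbar{a}$ of $\thickbar{f}$ satisfies $d(\thickbar{a}(\thickbar{f}(x)),x)\le 6r'+\varepsilon<\frac12\sys(G)$. So $\thickbar{a}\circ\thickbar{f}$ is homotopic to $\mathrm{id}_G$ along unique geodesics, which forces $H_1(\thickbar{f})$ to be injective, a contradiction. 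This homotopy step is where $\frac{1}{12}$ comes from ($6\dGH<\frac12\sys$), not from small balls being trees.
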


    Looking at \cref{Example:ConstructionForHMuchBiggerThanGH}, we see that there is just one extremal point $p_0 \in \operatorname{Ext}(T, U_0)$ and that it is a leaf, making \cref{theorem:introduction-main} not applicable.
    
    \cref{theorem:introduction-main} is a generalization of Theorem 6.11 of \cite{AdamsMajhiManinVirkZavaMetricGraphs}. Compared to the latter, it increases the upper bound from $\frac{1}{24}e(G_0)$ where $e(G_0)$ is the length of the shortest edge contained in the graph's core (i. e. its minimal deformation retract) to $\frac{1}{12}\sys(G)$. Note that in every graph we have $\sys(G) \geq e(G_0)$ with equality only holding in certain cases. This makes the systole a more suitable quantity to relatively bound the Gromov-Hausdorff distance against.

    We prove \cref{theorem:introduction-main} as a special case of the more discerning \cref{theorem:main}, which unites Theorems 4.3 and 6.11 of \cite{AdamsMajhiManinVirkZavaMetricGraphs}. In it, we further distinguish between two types of non-leaf points. Extremal points of one such type then immediately imply $\dGH(G, U) = \dH(G, U)$.

    For the proof of \cref{theorem:main}, we employ a concept also introduced in \cite{AdamsMajhiManinVirkZavaMetricGraphs} which we have dubbed $\varepsilon$-continuizations. We then rely on basic results from Algebraic Topology as opposed to the more elementary construction used in \cite{AdamsMajhiManinVirkZavaMetricGraphs}, allowing for a more streamlined proof.

    \subsection*{Acknowledgements}
    The author would like to thank Adams, Majhi, Manin, Virk and Zava for posing the initial question and Alexander Lytchak for his valuable guidance and input.

\section{Prerequisites}
    \subsection{Assumed prerequisites}
        \subsubsection{Gromov--Hausdorff Distance}
            In this paper we assume the reader is familiar with the concept of the Gromov--Hausdorff distance and its various definitions:
            \begin{enumerate}[(i)]
                \item via the Hausdorff distance
                \begin{equation}
                    \dGH(X, Y) := \inf\left\{\dH^Z(X', Y') : X', Y' \text{ are isometric copies within a space } Z\right\} .
                \end{equation}
                \item via the distortion of correspondences
                \begin{equation}
                    \dGH(X, Y) = \frac{1}{2} \inf\left\{\dis(R) : R \text{ correspondence between } X \text{ and } Y\right\} .
                \end{equation}
                \item via the distortion and codistortion of two maps
                \begin{equation}
                    \dGH(X, Y) = \frac{1}{2} \inf\left\{ \max\left\{\dis(f), \dis(g), \codis(f, g)\right\} \text{ for } f : X \to Y, g : Y \to X \right\} ,
                \end{equation}
                wherein we use
                $$ \codis(f, g) = \sup\left\{\left|d_X(x, g(y)) - d_Y(f(x), y)\right| : x \in X, y \in Y \right\} . $$
            \end{enumerate}
            We recall that for two compact spaces all infima in this definition are attained for some shared isometric embedding, correspondence or pair of maps.
        \subsubsection{Metric Graphs}\label{subsubsec:metric-graphs}
            In this paper we only discuss connected metric graphs, a very narrow class of metric graphs which therefore does not require too formal a definition.
            \\ We define a \emph{finite connected metric graph} $G$ to be a compact length space in which every point $x$ has a neighbourhood that is isometric to a Euclidean Cone over a finite metric space equipped with its natural intrinsic length-metric via an isometry mapping $x$ to the tip of the cone.
            \\ We note that the number of points that induce this cone is unique with respect to $x$ and call it the \emph{degree} of $x$ in $G$.
            
            We say that a metric graph is equipped with a \emph{simplicial structure} when we fix a homeomorphism between it and a simplicial complex. In fixing a simplicial structure, we also fix which points in the graph we consider \emph{vertices}, namely the points corresponding to points of the $0$-skeleton of the simplicial complex. The choice of vertices of degree $2$ is highly dependent on the chosen structure while all points of degree $\neq 2$ are vertices regardless.
            \\ We also call two distinct vertices \emph{adjacent} with respect to a simplicial structure if they are part of the same edge.

            We call a point $g \in G$ of degree $1$ a \emph{leaf} and denote by \emph{$\partial G$} the set of leaves in $G$.
            \\ We call a point $g \in G$ a \emph{cut-point} if $G \setminus \{g\}$ is no longer connected.
            \\ We call a point $g \in G$ a \emph{cyclical point} if it is part of a simple cycle in $G$ (i.e. a non-contractible closed curve).
            \\ Note that a point that is neither a leaf nor a cut-point must be cyclical (though a point can be cyclical and a cut-point at the same time).

            In a finite metric graph $G$, there can only ever be finitely many simple cycles which all have positive lengths.
            \\ We define the \emph{systole} of a graph $G$ as the minimum length of a simple cycle and denote it \emph{$\sys(G)$}.
            \\ An immediate consequence is that two points that have distance $< \frac{1}{2} \sys(G)$ are connected by a unique geodesic.

            Note that a connected closed subset of a graph equipped with the subspace metric is generally not a metric graph again, as it need not be a length space. It is, however, homeomorphic to its own induced length space which is a finite connected metric graph. We therefore often apply concepts like the degree of points that only depend on topology and not the underlying metric to subsets of graphs as well.
    \subsection{Non-standard concepts and results}
        \subsubsection{Extremal Points For Subsets}
            \hypertarget{extremal-points-restatement}{}
            \begingroup
                \stepcounter{definition}
                \extremalPointsDefinition*
            \endgroup
            \begin{remark}
                If we assume a metric space $X$ to be compact and nonempty, we easily see that $\operatorname{Ext}(X, A) \neq \emptyset$ for every nonempty subset $A \subseteq X$.
                \\ We also see that $\operatorname{Ext}(X, A) = \operatorname{Ext}(X, \overline{A})$.
                \\ We also note that 
                $$ \dH(X, A) = \max_{x \in X} \dist(x, A) , $$
                and so the extremal points are exactly the points in $X$ attaining the maximal distance to $A$.
            \end{remark}
        \subsubsection{Separation distance}
            
			\begin{definition} \label{definition:separation distance}
				For a metric space $X$, we define its \emph{separation distance} as
                \begin{equation}
                    \Sep(X) := \sup \left\{r > 0 : \exists_{\emptyset \neq A, A' \subseteq X} : A \cup A' = X, \dist(A, A') > r\right\} ,
                \end{equation}
                wherein we use $\sup\emptyset = 0$.
			\end{definition}
            
			\begin{remark}
				In the definition of separation distance we require neither disjointness nor openness of the sets $A$ and $A'$.
				\\ For a metric space with $\Sep(X) > 0$, we may as well require openness and disjointness, as $\dist(A, A') > r > 0$ implies $A \cap A' = \emptyset$.
                Together with $A \cup A' = X$ we see that for $0 < \rho < r < \frac{\Sep(X)}{2}$ we have $A = B_\rho(A)$ and $A' = B_\rho(A')$, implying openness.
			\end{remark}

            \begin{remark}
				Consider a connected space $X$ and any two nonempty sets $A, A' \subseteq X$ such that $A \cup A' = X$.
				\\ Note that the closures $\overline{A}$ and $\overline{A'}$ of $A$ and $A'$ then also satisfy these properties.
                \\ In addition, they are closed sets satisfying $\overline{A} \cup \overline{A'} = X$, so by connectedness of $X$ they cannot be disjoint but intersect, implying
				$$ \dist(A, A') = \dist(\overline{A}, \overline{A'}) = 0 . $$
				Since $A$ and $A'$ were arbitrary with respect to the conditions of \cref{definition:separation distance} we conclude $\Sep(X) = 0$.
			\end{remark}
			\begin{lemma}
				The separation distance is a $2$-Lipschitz property of bounded metric spaces with respect to the Gromov--Hausdorff distance, i.e. for any two bounded metric spaces $X$ and $\tilde{X}$ we have
				$$ \left|\Sep(X) - \Sep(\tilde{X})\right| \leq 2 \cdot \dGH(X, \tilde{X}) . $$
			\end{lemma}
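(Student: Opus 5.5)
By symmetry it suffices to show $\Sep(\tilde{X}) \geq \Sep(X) - 2\eta$ for every $\eta > \dGH(X, \tilde{X})$. Exchanging the roles of $X$ and $\tilde{X}$ then gives the reverse inequality, and letting $\eta$ decrease to $\dGH(X,\tilde X)$ gives the claim. If $\Sep(X) \leq 2\eta$ there is nothing to prove, since $\Sep(\tilde{X}) \geq 0$. So assume $\Sep(X) > 2\eta$ and fix any $r$ with $2\eta < r < \Sep(X)$ for which there exist nonempty $A, A' \subseteq X$ with $A \cup A' = X$ and $\dist(A, A') > r$. Such $r$ can be chosen arbitrarily close to $\Sep(X)$, because the defining set of the supremum is downward closed in $(0,\infty)$. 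It is then enough to produce a witness showing $\Sep(\tilde{X}) \geq r - 2\eta$.

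I would work with the correspondence formulation (ii). Since $\eta > \dGH(X, \tilde{X})$, there is a correspondence $R \subseteq X \times \tilde{X}$ with $\dis(R) < 2\eta$; compactness is not needed, because we only use a non-optimal correspondence. Transport the partition to $\tilde{X}$ by setting
\[
\tilde{A} := \{\tilde{x} \in \tilde{X} : \exists\, x \in A,\ (x, \tilde{x}) \in R\}, \qquad \tilde{A}' := \{\tilde{x} \in \tilde{X} : \exists\, x \in A',\ (x, \tilde{x}) \in R\}.
\]
Because $R$ is a correspondence, every $\tilde{x}$ is related to some $x \in X = A \cup A'$, so $\tilde{A} \cup \tilde{A}' = \tilde{X}$. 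Both sets are nonempty, since $A, A'$ are nonempty and every point of $X$ has an $R$-partner.

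For the distance estimate, take $\tilde{a} \in \tilde{A}$ and $\tilde{a}' \in \tilde{A}'$ with partners $a \in A$ and $a' \in A'$. Then
\[
d(\tilde{a}, \tilde{a}') \geq d(a, a') - \dis(R) > d(a,a') - 2\eta \geq \dist(A, A') - 2\eta.
\]
Taking the infimum over such pairs gives
\[
\dist(\tilde{A}, \tilde{A}') \geq \dist(A, A') - \dis(R) > r - \dis(R) > r - 2\eta.
\]
The middle strict inequality uses $\dist(A,A') > r$; the outer one uses $\dis(R) < 2\eta$. Hence $r - 2\eta$ satisfies the defining condition whenever it is positive, which it is because $r > 2\eta$. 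So $\Sep(\tilde{X}) \geq r - 2\eta$, and letting $r \to \Sep(X)$ gives $\Sep(\tilde{X}) \geq \Sep(X) - 2\eta$.

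I expect the only delicate step to be the bookkeeping between the strict inequalities and the supremum convention. The definition requires the strict inequality $\dist > r$, and the argument has to handle the degenerate case $\Sep(X)\le 2\eta$ separately. Both are dealt with by the choice of $r$ and $\eta$ above. Boundedness of the spaces is used only to make $\dGH$ finite and thus meaningful; if $\Sep(X)=\infty$ for some bounded $X$, for instance a single point where the supremum runs over all $r$ with $A=A'=X$ and $\dist(A,A')=0$, the conventions should be checked. In the nondegenerate case the same argument applies verbatim with $r$ arbitrary.
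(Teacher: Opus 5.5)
Your proof is correct and is essentially the paper's argument: you push a separating cover $A \cup A' = X$ through a correspondence $R$ of small distortion and use $d(\tilde a,\tilde a') \geq d(a,a') - \dis(R)$, with your slack $\eta > \dGH(X,\tilde X)$ playing the role of the paper's $\varepsilon$. Your closing worry is unfounded, however: for a single point the only admissible choice $A = A' = X$ gives $\dist(A,A') = 0$, so the defining set is empty and $\Sep = 0$; more generally, $r < \dist(A,A') \leq \diam(X)$ shows $\Sep(X) \leq \diam(X) < \infty$ for bounded $X$, so no infinite case can arise.
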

			\begin{proof} \; \\ \noindent
				To prove this, note that it suffices to show the following:				
				\begin{adjustwidth}{2em}{0em}
					Assume that for some $r > 0$ in a bounded space $X$ there exist nonempty sets $A$ and $A'$ such that $A \cup A' = X$ and $\dist(A, A') > r$.
					\\ Then in any bounded space $\tilde{X}$ there exist nonempty sets $\tilde{A}$ and $\tilde{A}'$ such that $\tilde{A} \cup \tilde{A}' = \tilde{X}$ and $\dist(\tilde{A}, \tilde{A}') \geq r - 2 \dGH(X, \tilde{X})$.
				\end{adjustwidth}
				Fix $\varepsilon > 0$ such that $\dist(A, A') - \varepsilon > r$.
				\\ Now choose a correspondence $R$ between $X$ and $\tilde{X}$ such that $\dis(R) < 2 \dGH(X, \tilde{X}) + \varepsilon$.
				\\ Setting $\tilde{A} = R[A]$ and $\tilde{A}' = R[A']$ we see that $\tilde{A} \cup \tilde{A}' = \tilde{X}$.
                \\ Also, for $\tilde{a} \in \tilde{A}$ and $\tilde{a}' \in \tilde{A}'$ find points $a \in A$ and $a' \in A'$ such that $(a, \tilde{a}), (a', \tilde{a}') \in R$.
                \\ We then have 
				$$ d(\tilde{a}, \tilde{a}') \geq d(a, a') - \dis(R) > \dist(A, A') - \varepsilon - 2\dGH(X, \tilde{X}) > r - 2\dGH(X, \tilde{X}) $$ 
				which in particular implies $\dist(\tilde{A}, \tilde{A}') \geq r - 2\dGH(X, \tilde{X})$.
			\end{proof}
		\subsubsection{\texorpdfstring{$\varepsilon$-continuizations}{epsilon-continuizations}}
            \begin{construction}[$\varepsilon$-continuizations] \label{Construction:Epsiloncontinuizations}
                Let $G$ and $G'$ be finite connected metric graphs and let $A \subseteq G$ be a compact connected subset of $G$ equipped with the subspace-metric.
                \\ Now, consider a map $f : A \to G'$ and $\varepsilon > 0$.
                \\ We say that $\thickbar{f} : A \to G'$ is an \emph{$\varepsilon$-continuization} of $f$ (with \emph{vertex-points} $V \subseteq A$) if it results from $f$ in the following way:
                \begin{itemize}
                    \item Let $V_0$ be the finite set of all points in $G$ that have degree $\neq 2$ as points of $G$ or as points in $A$.
                    We fix a finite $\varepsilon$-net $V \supseteq V_0$ and a simplicial structure for $G$ such that all points in $V$ are vertices.
                    \item We set $\thickbar{f}(v) = f(v)$ for any $v \in V$.
                    \item We map geodesic segments $[v, v'] \subseteq A$ for adjacent $v, v' \in V$ affinely to some geodesic segment from $f(v)$ to $f(v')$.
                \end{itemize}
                Note that generally, this segment does not have to be unique. However, once we make a choice of segment, $\thickbar{f}$ is well-defined as every point lies in $V$ or on a unique geodesic segment between adjacent vertices in $V$.
            \end{construction}
            \begin{lemma} \label{Lemma:PropertiesOfEpsiloncontinuizations}
                Let $f : G \to G'$ be a map between two finite metric graphs, $\varepsilon > 0$, and let $\thickbar{f}$ be an $\varepsilon$-continuization of $f$ with a vertex set $V$. Then,
                \begin{enumerate}[(i)]
                    \item $$ \thickbar{f}(G) \subseteq f(G)^{\left(\frac{\dis(f) + \varepsilon}{2}\right)} $$
                    \item $$ \dis\left(\thickbar{f}\right) \leq 2 \dis(f) + 2 \varepsilon . $$
                \end{enumerate}
            \end{lemma}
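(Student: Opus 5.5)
Both bounds come from controlling one segment $[v,v']$ between adjacent vertices of $V$ at a time. The only property of $V$ we need is that it is an $\varepsilon$-net containing all points of degree $\neq 2$, chosen so that every edge of the simplicial structure has length at most $\varepsilon$. Concretely, adjacent $v,v'$ satisfy $d(v,v') \le \varepsilon$, and every point $x \in G$ lies on a geodesic segment $[v,v']$ with $d(x,v) + d(x,v') = d(v,v') \le \varepsilon$. If the net is not fine enough along edges to give this, we may refine $V$ beforehand; refining keeps it an $\varepsilon$-net and does not change the construction.

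For (i), fix $x \in [v,v']$. By construction, $\thickbar{f}(x)$ lies on a geodesic $\gamma$ from $f(v)$ to $f(v')$, and
$$ d\bigl(f(v), f(v')\bigr) \le d(v,v') + \dis(f) \le \varepsilon + \dis(f). $$
Any point on a geodesic is within half its length of one of its two endpoints. Hence $\thickbar{f}(x)$ is within $\frac{\dis(f)+\varepsilon}{2}$ of $f(v)$ or of $f(v') \in f(G)$, which gives (i). We will also record the more precise affine fact: if $t = d(v,x)/d(v,v')$, then
$$ d\bigl(\thickbar{f}(x), f(v)\bigr) = t\, d\bigl(f(v),f(v')\bigr) \quad\text{and}\quad d\bigl(\thickbar{f}(x), f(v')\bigr) = (1-t)\, d\bigl(f(v),f(v')\bigr). $$

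For (ii), take $x \in [v,v']$ and $y \in [w,w']$, and choose the endpoints nearest in the parametrisation. Call them $v_x$ for $x$ and $w_y$ for $y$, so that $d\bigl(\thickbar{f}(x), f(v_x)\bigr) \le \frac{\dis(f)+\varepsilon}{2}$, and similarly for $y$. Applying the triangle inequality twice gives
$$ \bigl|d\bigl(\thickbar{f}(x),\thickbar{f}(y)\bigr) - d\bigl(f(v_x),f(w_y)\bigr)\bigr| \le \dis(f) + \varepsilon. $$
Moreover $\bigl|d\bigl(f(v_x),f(w_y)\bigr) - d(v_x,w_y)\bigr| \le \dis(f)$, so it remains to bound $|d(v_x,w_y) - d(x,y)|$. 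This is at most $d(x,v_x) + d(y,w_y)$. Naively that is up to $\varepsilon$, which would give the total $2\dis(f) + 2\varepsilon$. The choice of endpoint, however, is made in the image, not in the domain, so $d(x,v_x)$ could be as large as $\varepsilon$ rather than $\varepsilon/2$, and this accounting is where care is needed.

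The main obstacle is making the two choices consistent. I would choose $v_x$ by the parameter: take $v_x = v$ if $t \le \frac12$, and $v_x = v'$ otherwise. Then $d(x,v_x) \le \varepsilon/2$, and by the affine formula
$$ d\bigl(\thickbar{f}(x),f(v_x)\bigr) \le \tfrac12\, d\bigl(f(v),f(v')\bigr) \le \frac{\varepsilon+\dis(f)}{2} $$
simultaneously. With this choice the total error is at most
$$ \Bigl(\frac{\varepsilon+\dis(f)}{2}\Bigr)\cdot 2 + \dis(f) + \frac{\varepsilon}{2}\cdot 2 = 2\dis(f) + 2\varepsilon, $$
which is exactly the claimed bound in (ii). The case $x, y$ on the same segment is covered by the same estimate, so no separate treatment is needed.
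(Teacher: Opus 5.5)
Your argument is the paper's: choosing the endpoint $v$ of the segment with $d(v,x)\le\frac12 d(v,v')$ gives both $d(x,v)\le\frac{\varepsilon}{2}$ and $d(\thickbar{f}(x),f(v))\le\frac12 d(f(v),f(v'))\le\frac{\dis(f)+\varepsilon}{2}$, and the same four-term triangle inequality then yields $2\dis(f)+2\varepsilon$. Drop the claim that refining $V$ ``does not change the construction'': a new vertex $u$ is sent to $f(u)$ rather than to a point of the chosen geodesic, so refinement changes $\thickbar{f}$, and $d(v,v')\le\varepsilon$ for adjacent vertices must instead be read as part of the definition of an $\varepsilon$-continuization, as the paper's proof tacitly does (an $\varepsilon$-net alone does not suffice: embed $G=[0,2\varepsilon]$ with $V=\{0,2\varepsilon\}$ isometrically onto half of a circle of length $4\varepsilon$ and route $\thickbar{f}$ along the other half, which violates (i)).
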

            \begin{proof} \; \\ \noindent
                \textbf{Part 1)} (i)
                \\ For a point $x \in G$ we can find vertex-points $v, v' \in V$ such that $d(v, v') \leq \varepsilon$, $x \in [v, v']$ and $d(v, x) \leq \frac{1}{2} d(v, v')$. Since $\thickbar{f}$ results from affine parametrization along the edges, we then get
                \begin{equation}
                    d(\thickbar{f}(v), \thickbar{f}(x)) \leq \frac{1}{2} d(\thickbar{f}(v), \thickbar{f}(v')) = \frac{1}{2} d(f(v), f(v')) \leq \frac{1}{2}\left(d(v, v') + \dis(f)\right) < \frac{1}{2}\left(\dis(f) + \varepsilon\right) .
                \end{equation}
                As $\thickbar{f}(v) = f(v) \in f(G)$, this proves (i).
                \\ \textbf{Part 2)} (ii)
                \\ For points $x, \tilde{x} \in G$ we can find vertex-points $v, v', \tilde{v}, \tilde{v}' \in V$ such that $d(v, v') < \varepsilon$, $d(\tilde{v}, \tilde{v}') < \varepsilon$, $x \in [v, v']$ and $\tilde{x} \in [\tilde{v}, \tilde{v}']$ with $d(v, x) \leq \frac{1}{2} d(v, v')$ and $d(\tilde{v}, \tilde{x}) \leq \frac{1}{2} d(\tilde{v}, \tilde{v}')$.
                Applying what we showed in the proof of (i) to this, we get
                \begin{flalign*}
                    & \left|d(\thickbar{f}(x), \thickbar{f}(\tilde{x})) - d(x, \tilde{x})\right|
                    \\ \leq & \underbrace{\left|d(\thickbar{f}(v), \thickbar{f}(\tilde{v})) - d(v, \tilde{v})\right|}_{\leq \dis(f)} + \underbrace{d(\thickbar{f}(v), \thickbar{f}(x))}_{< \frac{\dis(f) + \varepsilon}{2}} + \underbrace{d(\thickbar{f}(\tilde{v}), \thickbar{f}(\tilde{x}))}_{< \frac{\dis(f) + \varepsilon}{2}} + \underbrace{d(v, x)}_{< \frac{\varepsilon}{2}} + \underbrace{d(\tilde{v}, \tilde{x})}_{< \frac{\varepsilon}{2}}
                    \\ \leq & 2 \dis(f) + 2 \varepsilon
                \end{flalign*}
                Since $x, \tilde{x} \in G$ were arbitrary, we have $\dis\left(\thickbar{f}\right) \leq 2 \dis(f) + 2 \varepsilon$, proving (ii).
            \end{proof}

    \subsection{Implicitly used results}
        \subsubsection{Uniqueness of geodesics implies continuous dependence on endpoints}
            Note that unless otherwise stated in the following, we always assume geodesics to be parameterized on $[0, 1]$ proportionally to arclength.
            \begin{lemma}[Continuously varying geodesics {\cite[Lemma~2.5]{LangLengthSpaces}}]
                \label{Lemma:UniqueGeodesicsDependContinuouslyOnEndpoints}
                Let $X$ be a proper metric space, and let $\sigma : [0,1] \to X$ be a geodesic such that $\sigma([0,1])$ is the only geodesic segment connecting $\sigma(0)$ and $\sigma(1)$.

                Suppose further that $\sigma_k : [0,1] \to X$ is a sequence of geodesics such that $\sigma_k(0) \to \sigma(0)$ and $\sigma_k(1) \to \sigma(1)$.
                Then $\sigma_k$ converges uniformly to $\sigma$, i.e.
                \[
                    \sup_{t \in [0,1]} d_X\bigl(\sigma_k(t), \sigma(t)\bigr) \to 0 .
                \]
            \end{lemma}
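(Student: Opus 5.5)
The plan is to argue by contradiction, combining a compactness argument (Arzelà--Ascoli) with the uniqueness hypothesis. Suppose the uniform convergence fails. Then there are $\delta > 0$ and a subsequence, still denoted $\sigma_k$, with $\sup_{t \in [0,1]} d_X(\sigma_k(t), \sigma(t)) \geq \delta$ for all $k$. The goal is to extract from this subsequence a further subsequence converging uniformly to some limit curve $\tau$, and then to show that $\tau = \sigma$. This contradicts the choice of $\delta$.

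\textbf{Equicontinuity and a common compact set.} Since each $\sigma_k$ is parameterized on $[0,1]$ proportionally to arclength, it is $L_k$-Lipschitz with $L_k := d_X(\sigma_k(0), \sigma_k(1))$. By the triangle inequality, $L_k \to L := d_X(\sigma(0), \sigma(1))$, so the $L_k$ are bounded by some $M$ and the family is uniformly equicontinuous. Moreover $d_X(\sigma_k(t), \sigma(0)) \leq d_X(\sigma_k(0), \sigma(0)) + M$, so all images lie in a fixed closed ball around $\sigma(0)$. This ball is compact because $X$ is proper. Arzelà--Ascoli then yields a subsequence converging uniformly to a continuous curve $\tau : [0,1] \to X$ with $\tau(0) = \sigma(0)$ and $\tau(1) = \sigma(1)$.

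\textbf{Identifying the limit.} For all $s,t$ we have $d_X(\sigma_k(s), \sigma_k(t)) = L_k |s-t|$. Passing to the limit gives $d_X(\tau(s), \tau(t)) = L|s-t|$, so $\tau$ is a geodesic from $\sigma(0)$ to $\sigma(1)$ parameterized proportionally to arclength. By hypothesis, $\tau([0,1]) = \sigma([0,1])$. It remains to upgrade this equality of images to equality of parametrizations; I expect this to be the only subtle step, though it is short. On the geodesic segment $\sigma([0,1])$, a point $p$ is uniquely determined by its distance to $\sigma(0)$, since $t \mapsto d_X(\sigma(0), \sigma(t)) = tL$ is injective when $L > 0$. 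We also have $d_X(\sigma(0), \tau(t)) = tL$, so $\tau(t) = \sigma(t)$ for every $t$. If $L = 0$, both curves are constant at $\sigma(0)$. Hence $\tau = \sigma$, so this subsequence converges uniformly to $\sigma$, contradicting $\sup_t d_X(\sigma_k(t), \sigma(t)) \geq \delta$.
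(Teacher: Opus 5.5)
Your proof is correct. The paper gives no argument of its own and defers to Lang's notes, whose proof is this Arzelà--Ascoli compactness argument combined with uniqueness of the limiting constant-speed geodesic, including your step upgrading equality of images to equality of parametrizations (with the degenerate case $L = 0$).
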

            The proof can be found in \cite[Lemma~2.5]{LangLengthSpaces}.
            
            As an immediate consequence we have:

            \begin{corollary} \label{Corollary:HomotopyFromUniqueGeodesics}
                Let $X$ and $Y$ be two proper length spaces and let $f, g : X \to Y$ be two continuous maps such that for all $x \in X$ there exists a unique geodesic $\sigma_x$ between $f(x)$ and $g(x)$.
                \\ Then there exists a homotopy between $f$ and $g$ given by:
                \begin{equation}
                    h : [0, 1] \times X \to Y, (t, x) \mapsto \sigma_x(t) .
                \end{equation} 
            \end{corollary}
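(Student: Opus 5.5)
The statement to prove is \cref{Corollary:HomotopyFromUniqueGeodesics}. The homotopy is simply the pointwise geodesic interpolation $h(t,x) = \sigma_x(t)$, with each $\sigma_x$ parameterized on $[0,1]$ proportionally to arclength.

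Three things need checking:
\begin{enumerate}[(a)]
    \item $h$ is well defined;
    \item $h(0,\cdot) = f$ and $h(1,\cdot) = g$;
    \item $h$ is jointly continuous on $[0,1] \times X$.
\end{enumerate}
Well-definedness is exactly the uniqueness hypothesis: for each $x$ the geodesic segment from $f(x)$ to $g(x)$ is unique, hence so is its constant-speed parametrization on $[0,1]$. The boundary conditions hold by construction, since $\sigma_x(0) = f(x)$ and $\sigma_x(1) = g(x)$. The content is therefore in (c).

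For continuity I would use sequences. $[0,1] \times X$ is a metric space, so it suffices to show that $(t_k, x_k) \to (t, x)$ implies $h(t_k, x_k) \to h(t, x)$. Continuity of $f$ and $g$ gives $\sigma_{x_k}(0) = f(x_k) \to f(x) = \sigma_x(0)$ and $\sigma_{x_k}(1) = g(x_k) \to g(x) = \sigma_x(1)$. Since $Y$ is proper and $\sigma_x([0,1])$ is the only geodesic segment between its endpoints, \cref{Lemma:UniqueGeodesicsDependContinuouslyOnEndpoints} applies and gives
\[
    \sup_{s \in [0,1]} d\bigl(\sigma_{x_k}(s), \sigma_x(s)\bigr) \to 0 .
\]
We then split with the triangle inequality:
\[
    d\bigl(\sigma_{x_k}(t_k), \sigma_x(t)\bigr) \leq \sup_{s} d\bigl(\sigma_{x_k}(s), \sigma_x(s)\bigr) + d\bigl(\sigma_x(t_k), \sigma_x(t)\bigr) .
\]
The first term tends to $0$ by the uniform convergence above. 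The second is at most $|t_k - t| \cdot d(f(x), g(x))$, because $\sigma_x$ has constant speed, so it also tends to $0$.

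The main subtlety is matching the hypotheses of \cref{Lemma:UniqueGeodesicsDependContinuouslyOnEndpoints}. Its uniqueness assumption concerns the geodesic segment as a set, whereas here uniqueness is posed for the geodesic between $f(x)$ and $g(x)$; these agree once the parametrization is fixed proportionally to arclength on $[0,1]$, and I would note this explicitly. The lemma also needs $Y$ proper, which is assumed. Properness of $X$ is not needed beyond guaranteeing that $X$ is a metric space, so that sequential continuity suffices.
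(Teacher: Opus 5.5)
Your proposal is correct and follows the paper's approach. The paper offers no written proof and calls the statement an immediate consequence of \cref{Lemma:UniqueGeodesicsDependContinuouslyOnEndpoints}; your sequential argument (uniform convergence of $\sigma_{x_k}$ to $\sigma_x$ combined with the constant-speed bound $d(\sigma_x(t_k),\sigma_x(t)) \leq |t_k - t|\, d(f(x),g(x))$) supplies exactly the omitted details, and your remarks that only properness of $Y$ is used and that a unique parametrized geodesic gives a unique segment are accurate.
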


        \subsubsection{Cellular homology of a graph}
            We shall use the cellular homology theory with coefficients in $\bbZ/2\bbZ$, in particular the homotopy-invariance and the Euler characteristic for metric graphs:
            $$ \beta_0(G) - \beta_1(G) = |V| - |E| $$
            where $\beta_0$ and $\beta_1$ are Betti numbers and $V$ and $E$ are the vertex and edge sets for a fixed realization of $G$ as a simplicial complex.
            One quickly sees that they justify the following result:
            \begin{lemma} \label{Lemma:BettiNumbersForMapsOntoProperSubsetsOfGraphs}
                Let $G$ be a connected finite metric graph, $g \in G$ a point that is neither a leaf nor a cut-point in $G$ and $U \subseteq G \setminus \{g\} \subseteq G$ a compact connected subset.
                \\ Then $\beta_1(U) < \beta_1(G)$.
            \end{lemma}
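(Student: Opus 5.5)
The plan is to compute $\beta_1$ on both sides via the Euler characteristic formula $\beta_0 - \beta_1 = |V| - |E|$, using simplicial structures that are compatible with each other. Since $U$ is compact and connected, it is homeomorphic to a finite connected metric graph, namely its induced length space. So $\beta_0(U) = \beta_0(G) = 1$, and it suffices to show that $|E_U| - |V_U| < |E_G| - |V_G|$ for suitable structures.

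First I fix a simplicial structure on $G$ whose vertices include $g$, all points of degree $\neq 2$ in $G$, and all points of $U$ having degree $\neq 2$ as points of $U$, together with the finitely many boundary points of $U$ inside $G$. There are finitely many such points because $U$ is a compact connected subgraph-like subset. Subdividing further if necessary, I arrange that $U$ is a subcomplex: every edge of $G$ lies either entirely in $U$ or meets $U$ only in vertices. With this choice $U$ is a subcomplex of $G$ not containing the vertex $g$.

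Next I use that $g$ is neither a leaf nor a cut-point. As noted in \cref{subsubsec:metric-graphs}, $g$ is then cyclical, so it lies on a simple cycle $C \subseteq G$. Since $g \notin U$, at least one edge $e$ of $C$ is not contained in $U$; take for instance an edge of $C$ incident to $g$.

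Now I build $G$ from $U$ by adding the missing cells in an order that controls $\beta_1$. Let $H$ be the connected component of $G \setminus U$ (with closure added back) that contains $e$. I add all vertices and edges of $G$ not in $U$ one at a time, always choosing a cell attached to what has already been built; this is possible since $G$ is connected. Adding a vertex together with an edge to it leaves $|E|-|V|$ unchanged, while adding an edge between two vertices already present increases $|E|-|V|$ by one. Hence $\beta_1$ is non-decreasing along the process.

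The main obstacle is showing that the increase is strict at least once, that is, that some edge gets added between two already present vertices. The cycle $C$ handles this. Consider the last edge of $C$ to be added. If $C \not\subseteq U$ this happens during the process, and at that moment both endpoints of that edge are already present, since the rest of $C$ is already built and every vertex of $C$ is an endpoint of two edges of $C$. So this step increases $|E|-|V|$ by one, and $\beta_1(U) < \beta_1(G)$.

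An alternative is a Mayer–Vietoris-free counting argument: $\beta_1(G) \ge \beta_1(U) + \beta_1(C \cup U) - \beta_1(U)$, with $\beta_1(C\cup U) > \beta_1(U)$ because $C$ closes a new cycle through $g \notin U$. However, the incremental-cell argument above is the cleanest route.
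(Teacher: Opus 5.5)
Your proof is correct, but it takes a different route from the paper.

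\textbf{The paper's route.} The paper does a local surgery at $g$. It deletes a small open ball around $g$, which gives a subgraph $G'\supseteq U$ with one vertex and $k=\deg g\ge 2$ edges removed. Because $g$ is not a cut-point, $\beta_0$ stays $1$, so $\beta_1(G')=\beta_1(G)-(k-1)<\beta_1(G)$. It then cites Hatcher for the fact that the inclusion of the subgraph $U\subseteq G'$ is injective on $H_1$.

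\textbf{Your route.} You compare $U$ and $G$ directly, in one simplicial structure in which $U$ is a subcomplex. Your cell-by-cell growth argument is in effect an elementary proof that $\beta_1(K)\le\beta_1(L)$ for connected subcomplexes $K\subseteq L$, which is the fact the paper imports. You get strictness from a simple cycle $C$ through $g$: the last edge of $C$ to be added has both endpoints already present, so it raises $|E|-|V|$ by one. This uses the remark that a point which is neither a leaf nor a cut-point is cyclical.

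\textbf{Trade-offs.} Your argument is self-contained and needs only the Euler characteristic formula. The paper's argument yields the sharper bound $\beta_1(U)\le\beta_1(G)-\deg(g)+1$ and uses the hypotheses on $g$ more directly.

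\textbf{A shared unproved step.} Both proofs assume that the compact connected set $U$ is a finite subgraph; you call it ``subgraph-like''. This is true. By boundary bumping, a continuum not contained in an open edge $e$ meets $e$ in at most two arcs, each abutting an end of $e$. So $U$ has finitely many boundary points in $G$, and a subdivision makes it a subcomplex.

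\textbf{Minor points.}
\begin{itemize}
\item The component $H$ you introduce is never used.
\item Your ``alternative'' inequality simply reduces to $\beta_1(G)\ge\beta_1(C\cup U)$.
\item Justifying $\beta_1(C\cup U)>\beta_1(U)$ is really a Mayer--Vietoris computation: $C\cap U$ is a proper subset of the circle $C$ and so has trivial $H_1$. Calling that route Mayer--Vietoris-free is therefore a misnomer.
\end{itemize}
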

            \begin{proof}
                We fix a simplicial structure for $G$ in which $g$ is a vertex and w.l.o.g. choose $\varepsilon$ smaller than all the edge-lengths of edges incident to $g$ and small enough that $U \subseteq G \setminus B_{\varepsilon}(g)$ using the compactness of $U$.
                \\ W.l.o.g. we also make all elements of $\partial B(g, \varepsilon)$ vertices in $G$ by potentially subdividing some edges.
                \\ We then see that removing $B(g, \varepsilon)$ from $G$ to form $G' := G \setminus B(g, \varepsilon)$ removes exactly the vertex $g$ and all its incident edges (at least $2$ edges because $g$ is a cyclical point).
                \\ It therefore decreases $|V|$ by $1$, decreases $|E|$ by $k \geq 2$ and keeps $\beta_0$ constant.
                Applying the Euler characteristic formula for metric graphs, we therefore see
                $$
                \beta_1(G')
                = \underbrace{\beta_0(G')}_{= \beta_0(G)}
                + \underbrace{|E(G')|}_{\leq |E(G)| - 2}
                \underbrace{- |V(G')|}_{\quad = -|V(G)| + 1}
                < \beta_0(G) + |E(G)| - |V(G)|
                = \beta_1(G) .
                $$
                In particular, we can now use the fact that $U$ is a compact connected subset of the $1$-dimensional simplicial complex $G'$, and so with the right simplicial structure for $G'$ we easily see that the injection $\operatorname{id} : U \to G'$ induces an injection $H_1(U) \to H_1(G')$, in particular implying $\beta_1(U) \leq \beta_1(G')$. Details behind this argument can be found in \cite[Section~2.1]{HatcherAlgebraicTopology}.
                \\ Using this, we finally conclude:
                $$ \beta_1(U) \leq \beta_1(G') < \beta_1(G) . $$
            \end{proof}
\section{Statement and Proof}
        The main statement of this paper is the following:
        \begin{restatable}{theorem}{mainTheorem}
            \label{theorem:main}
            Let $G$ be a finite metric graph and let $U \subseteq G$ be a subset.
            \begin{enumerate}[(i)]
                \item If $\operatorname{Ext}(G, U)$ contains any cut-point of $G$, then $\dGH(G, U) = \dH(G, U)$.
                \item If $\operatorname{Ext}(G, U)$ contains a cyclical point of $G$ and we have
                \[
                    \dGH(G, U) < \frac{1}{12} \sys(G) ,
                \]
                then $\dGH(G, U) = \dH(G, U)$.
            \end{enumerate}
        \end{restatable}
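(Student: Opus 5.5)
We may replace $U$ by its closure $\overline{U}$: this changes neither $\dH(G,U)$, nor $\dGH(G,U)$, nor $\operatorname{Ext}(G,U)$. So assume $U$ is compact. Write $r := \dH(G,U)$ and $\delta := \dGH(G,U)$. The inequality $\delta \le r$ always holds, so in both parts it suffices to show $r \le \delta$. Fix an extremal point $p \in \operatorname{Ext}(G,U)$. Then $\dist(p,U) = r$, so the open ball $B(p,r)$ misses $U$. We may assume $r>0$.

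\textbf{Part (i).} Let $p$ be a cut-point. Then $G \setminus \{p\}$ splits into at least two components.

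First, I claim $U$ meets at least two of these components. Suppose instead that $U$ lies in a single component $C$. Pick another component $D$ and a point $x \in D$ with $x \neq p$. Every path from $x$ to $U$ passes through $p$, so $\dist(x,U) = d(x,p) + \dist(p,U) > r$. This contradicts $\dH(G,U) = r$.

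So write $U = A \cup A'$, where $A$ is the part of $U$ in one component and $A'$ the part in the union of the others. Any path from $A$ to $A'$ passes through $p$, and both endpoints are at distance $\geq r$ from $p$. Hence $\dist(A,A') \ge 2r$, which gives $\Sep(U) \ge 2r$ (up to the strictness in the supremum, handled by taking $r' < 2r$). On the other hand $\Sep(G) = 0$ because $G$ is connected. The $2$-Lipschitz lemma for $\Sep$ then gives $2r \le 2\delta$, so $r \le \delta$.

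\textbf{Part (ii).} By (i) we may assume $p$ is cyclical and not a cut-point. Since $\deg p \ge 2$, $p$ is also not a leaf. Suppose for contradiction that $\delta < r$.

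Take optimal maps $f : G \to U$ and $g : U \to G$ with $\dis f, \dis g, \codis(f,g) \le 2\delta$. In particular $d(x, g(f(x))) \le 2\delta$ for all $x \in G$. Regard $f$ as a map $G \to G$ and choose a small $\varepsilon$ with $\delta + \varepsilon < r$. Take an $\varepsilon$-continuization $\bar f$. By \cref{Lemma:PropertiesOfEpsiloncontinuizations}, $\bar f$ is continuous with $\bar f(G) \subseteq U^{(\delta+\varepsilon)}$ and $\dis \bar f \le 4\delta + 2\varepsilon$. Let $A$ be a compact connected set containing the connected set $\bar f(G)$ and contained in $U^{(\delta+\varepsilon)}$; for instance, the closure of $\bar f(G)$. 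Since $\delta + \varepsilon < r$, we get $p \notin A$.

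Next extend $g$ to $A$ by $\tilde g := g \circ \pi$, where $\pi : A \to U$ is a nearest-point choice. This gives $\dis \tilde g \le 2\delta + 2(\delta+\varepsilon)$. Then take an $\varepsilon$-continuization $\bar g : A \to G$ of $\tilde g$.

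The key estimate is a uniform bound on $d(x, \bar g(\bar f(x)))$. Compare, in order:
\begin{itemize}
\item $\bar g \bar f(x)$ with $\tilde g \bar f(x)$, using part (i) of \cref{Lemma:PropertiesOfEpsiloncontinuizations} for $\bar g$;
\item $\tilde g \bar f(x)$ with $g f(v)$ for a nearby vertex $v$, using the distortion of $\tilde g$ and the bound $d(\bar f(x), f(v)) \le \delta + \varepsilon$;
\item $g f(v)$ with $v$, using the codistortion bound $2\delta$;
\item $v$ with $x$, which costs at most $\varepsilon$.
\end{itemize}
Adding these should give a bound of order $6\delta + O(\varepsilon)$. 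The hypothesis $\delta < \frac{1}{12}\sys(G)$ makes this smaller than $\frac12 \sys(G)$. Then $x$ and $\bar g \bar f(x)$ are joined by a unique geodesic, and \cref{Corollary:HomotopyFromUniqueGeodesics} shows that $\bar g \circ \bar f \simeq \operatorname{id}_G$.

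Passing to $\bbZ/2\bbZ$ homology, the identity of $H_1(G)$ factors as $H_1(G) \to H_1(A) \to H_1(G)$. Hence $\beta_1(A) \ge \beta_1(G)$. But $A \subseteq G \setminus \{p\}$ is compact and connected, and $p$ is neither a leaf nor a cut-point, so \cref{Lemma:BettiNumbersForMapsOntoProperSubsetsOfGraphs} gives $\beta_1(A) < \beta_1(G)$. This contradiction shows $r \le \delta$.

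\textbf{Main obstacle.} The main difficulty is the bookkeeping in that displacement estimate, so that the constant really comes out below $\frac12\sys(G)$ under the assumption $\delta < \frac1{12}\sys(G)$. A cruder chain of estimates may force a worse constant. In that case I would try to avoid the nearest-point extension and continuize $g$ only on a net of $U$ that is fine enough to build $A$.
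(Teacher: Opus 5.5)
Your Part (i), the reduction in (ii) to a point that is neither a leaf nor a cut-point, and the homological endgame are correct and match the paper. The gap is the displacement estimate, which is the whole quantitative content of (ii): the chain you list does not give $6\delta$. Part (i) of \cref{Lemma:PropertiesOfEpsiloncontinuizations} does not compare $\bar g(y)$, where $y=\bar f(x)$, with $\tilde g(y)$. It only gives $d(\bar g(y),\tilde g(w))\le\frac12(\dis\tilde g+\varepsilon)\approx 2\delta$, where $w$ is the nearer endpoint of the edge $[w,w']$ of $A$ containing $y$. Because $\tilde g$ is discontinuous, getting from $\tilde g(w)$ to $\tilde g(y)$ costs roughly another $\dis\tilde g\approx 4\delta$. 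So your first bullet alone already uses the whole $6\delta$ budget, and the second bullet adds about $5\delta$ more. Even if you go directly from $\bar g(y)$ to $\tilde g(w)=g(\pi(w))$, routing through $g(f(v))$ pays for $g$ twice. You pay once via $\dis g$ in $d(g(\pi(w)),g(f(v)))\le d(\pi(w),f(v))+2\delta\approx 4\delta$, and once via the codistortion in $d(g(f(v)),v)\le 2\delta$. The total is $8\delta+O(\varepsilon)$, which only proves the theorem under $\dGH(G,U)<\frac1{16}\sys(G)$.

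The missing idea is to skip $g(f(v))$ and apply the codistortion bound directly to the pair $(v,\pi(w))$. Since $f(v)\in U$, we have $d(w,\pi(w))\le d(w,f(v))\le d(w,y)+d(y,f(v))\le\delta+\varepsilon$, so $d(f(v),\pi(w))\le 2\delta+2\varepsilon$. Likewise $d(\pi(w),\pi(w'))\le 2\delta+2\varepsilon$. Hence
\[
d\left(x,\bar g(\bar f(x))\right)\le d(x,v)+\left(d(f(v),\pi(w))+2\delta\right)+\frac12\, d\left(g(\pi(w)),g(\pi(w'))\right)\le\frac{\varepsilon}{2}+(4\delta+2\varepsilon)+(2\delta+\varepsilon),
\]
which is $6\delta+O(\varepsilon)$, exactly what $\delta<\frac1{12}\sys(G)$ requires. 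With this repair your argument is a valid variant of the paper's. The paper avoids $g$, the codistortion and the projection $\pi$ altogether. It takes a right inverse $a$ of $\bar f$ on $W=\bar f(G)$ and observes $\dis a\le\dis\bar f\le 4\delta+O(\varepsilon)$. It then bounds the displacement by $\frac12(\dis a+\varepsilon)$ plus the diameter bound $\dis\bar f+O(\varepsilon)$ for fibres of $\bar f$ over nearby points, i.e.\ $2\delta+4\delta+O(\varepsilon)$. Both routes reach the same constant $\frac1{12}$; the paper's uses only the single map $f$, while yours needs the optimal pair $(f,g)$ and the nearest-point extension.
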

        \begin{proof}[Proof]
            In the case $\dH(G, U) = 0$ the statements obviously hold and thus, we may assume that $\dH(G, U) > 0$. By considering its closure $\overline{U}$ instead of $U$, we may also assume that $U$ is closed and therefore compact.
            \\ We now fix an extremal point $g \in \operatorname{Ext}(G, U)$.

            \textbf{Case 1)} $g$ is a cut-point of $G$.
            \\ Consider at least two of the connected components of $G \setminus \{g\}$ and assume that they do not both contain elements of $U$.
            \\ Clearly, moving $g$ into a component that does not contain elements of $U$ would then increase the distance to all other components of $U$, contradicting the fact that $g$ is an extremal point maximizing the distance to $U$.
            \\ Thus, in any connected component of $G \setminus \{g\}$ there must be an element of $U$, allowing us to partition $U$ into the intersection $U'$ of $U$ with one such component and $U'' := U \setminus U'$.
            \\ The shortest path between elements of $U'$ and $U''$ must always go through $g$ and therefore be of length $\geq \dist(g, U') + \dist(g, U'') \geq 2\dH(G, U)$.
            \\ In particular, this proves that $\operatorname{Sep}(U) \geq 2\dH(G, U)$, whereas $\operatorname{Sep}(G) = 0$ as $G$ is connected.
            \\ From this we conclude
            $$ 2 \dGH(G, U) \geq \left|\Sep(G) - \Sep(U)\right| = \Sep(U) \geq 2\dH(G, U) . $$

            \textbf{Case 2)} $g$ is neither a leaf nor a cut-point and $\dGH(G, U) < \frac{1}{12} \sys(G)$.
            
            \textbf{Step 1)}
            \\ We fix a map $f : G \to U$ of distortion $\dis(f) \leq 2 \dGH(G, U)$.
            \\ For contradiction, we assume that $\dGH(G, U) < \dH(G, U)$.
            \\ We are then able to choose a value $\varepsilon > 0$ such that 
            $$ \underbrace{\frac{\dis(f) + \varepsilon}{2}}_{=: r} < \underbrace{\frac{\dis(f) + 2\varepsilon}{2}}_{= r + \frac{\varepsilon}{2}} < \min\left\{\dH(G, U), \frac{1}{12}\sys(G)\right\} . $$
            We fix an $\varepsilon$-continuization $\thickbar{f} : G \to G$ of $f$ on a vertex-set $V$ and equip $G$ with the corresponding simplicial structure.
            We then have by \cref{Lemma:PropertiesOfEpsiloncontinuizations} that 
            $$ \dis(\thickbar{f}) \leq 4r < 4r + 2 \varepsilon < 4 \min\left\{\dH(G, U), \frac{1}{12}\sys(G)\right\} \leq \frac{1}{3} \sys(G) $$
            and
            $$ \thickbar{f}(G) \subseteq B_{r}(U) \subseteq G \setminus \{g\} . $$
            In particular, since $g$ is not a cut-point of $G$ we can apply \cref{Lemma:BettiNumbersForMapsOntoProperSubsetsOfGraphs} and see that for $W := \thickbar{f}(G) \subseteq G \setminus \{g\} \subseteq G$ we have $\beta_1(W) < \beta_1(G)$.
            \\ From this we can in particular conclude $H_1(\thickbar{f}) : H_1(G) \to H_1(W)$ is not injective.

            \textbf{Step 2)}
            \\ On the compact set $W := \thickbar{f}(G) \subseteq G$ choose a finite $\varepsilon$-net $V \subseteq W$ containing all vertices of degree $\neq 2$ in $W$ or $G$ and fix the corresponding simplicial structure.
            We now choose a right-inverse $a : W \to G$ of the surjection $\thickbar{f} : G \to W$ and from it construct an $\varepsilon$-continuization $\thickbar{a} : W \to G$ for which we will show that
            \begin{equation} \label{eq:RequirementForFunctiona}
                \dH(\thickbar{a}(w), \thickbar{f}^{-1}(w)) \leq \frac{1}{2}\sys(G) - \varepsilon \qquad \text{ for } w \in W .
            \end{equation}

            For any point $w \in W$ we can now consider the edge-segment $[v, v']$ containing $w$ (or one of them if $w$ is a vertex), such that $d(w, v) \leq \frac{1}{2} d(v, v')$ and as we have seen in the proof of \cref{Lemma:PropertiesOfEpsiloncontinuizations} we have
            \begin{equation}
                d(\thickbar{a}(w), \thickbar{a}(v)) \leq \frac{\dis(a) + \varepsilon}{2} \leq \frac{\dis(\thickbar{f}) + \varepsilon}{2} \leq 2r + \frac{\varepsilon}{2} .
            \end{equation}
            Also, using $\thickbar{a}(v) = a(v) \in \thickbar{f}^{-1}(v)$ we have
            \begin{equation}
                \dH(\thickbar{a}(v), \thickbar{f}^{-1}(w)) 
                \leq \diam(\thickbar{f}^{-1}(\{v, w\}))
                \leq \dis(\thickbar{f}) + \diam(\{v, w\})
                \leq 4r + \frac{\varepsilon}{2} .
            \end{equation}
            Combining these inequalities yields
            \begin{equation}
                \dH(\thickbar{a}(w), \thickbar{f}^{-1}(w)) 
                \leq d(\thickbar{a}(w), \thickbar{a}(v)) + \dH(\thickbar{a}(v), \thickbar{f}^{-1}(w))
                \leq 6r + \varepsilon
                < \frac{1}{2}\sys(G) - \varepsilon .
            \end{equation}
            As we have now shown that $\thickbar{a}$ satisfies \cref{eq:RequirementForFunctiona}, for every point $w \in W$ there exists a unique geodesic from $\thickbar{a}(w)$ to any point in $\thickbar{f}^{-1}(w)$ of length $\leq \frac{1}{2}\sys(G) - \varepsilon$.
            \\ In particular, we see that for every $x \in G$ there exists a unique such geodesic between $x = \operatorname{id}_{G}(x) \in \thickbar{f}^{-1}(\thickbar{f}(x))$ and $(\thickbar{a} \circ \thickbar{f})(x)$.
            \\ The existence and uniqueness of these geodesics by \cref{Corollary:HomotopyFromUniqueGeodesics} imply a homotopy between $\operatorname{id}_{G}$ and $(\thickbar{a} \circ \thickbar{f})$.
            \\ This then implies
            $$ \operatorname{id}_{H_1(G)} = H_1(\thickbar{a} \circ \thickbar{f}) = H_1(\thickbar{a}) \circ H_1(\thickbar{f}) $$
            while in Step 1) we showed that $H_1(\thickbar{f})$ is not injective.
            \\ This finally gives the desired contradiction.
        \end{proof}

\section{Open Questions}
    % \subsection{The origin of the factor $\frac{1}{12}$}
        The biggest open question is, of course, whether the constant factor $\frac{1}{12}$ in \cref{theorem:main} can be improved. 
        We note that using the approach taken in this paper, it is the composition of three factors $\frac{1}{2}$, $\frac{1}{2}$ and $\frac{1}{3}$. Counterexamples proving that these bounds cannot generally be improved in either one of the three situations exist.
        However, we are not certain that these three counterexamples can coincide in an example where $\dGH(G, U) < \dH(G, U)$ while $\dGH(G, U) = \frac{1}{12}\sys(G)$.
        % \begin{itemize}
        %     \item The first factor of $2$ comes from the definition of the Gromov-Hausdorff distance via correspondences. Here, we use the Gromov-Hausdorff distance to consider a function $f : G \to U$ of distortion $\leq 2 \dGH(G, U)$.
        %     \item A further factor of $2$ comes from the distortion formula for $\varepsilon$-continuizations, giving us a continuous function $\thickbar{f} : G \to U$ of distortion $\leq 4 \dGH(G, U)$.
        %     \item The last factor of $3$ comes from the fact that below a threshold of the form $\dis(\thickbar{f}) < \frac{1}{3} \sys(G)$, we can prove that $\thickbar{f}$ is a homotopy equivalence.  
        % \end{itemize}
        
    % \subsection{Improvements upon the result}
        % The different ideas used are each optimal on their own with counterexamples showing that at no point we could simply swap in a better factor. More precisely,
        % \begin{itemize}
        %     \item There are subsets $U$ of metric graphs $G$ such that $\dis(f) \geq 2 \dGH(G, U)$ for every map $f : G \to U$.
        %     \item There are maps $f : G \to U$ such that their $\varepsilon$-continuizations have distortion up to $2(\dis(f) + \varepsilon)$.
        %     \item There is a continuous map $f : S^1 \to T$ where $T$ is a tripod with three edges of length $\frac{\pi}{3}$ such that $\dis(f) = \frac{2 \pi}{3} = \frac{1}{3}\sys(G)$. Such an $f$ then obviously does not preserve homology.
        % \end{itemize}
        % However, a counterexample that proves all three factors optimal at the same time has yet to be found.
        
        To discuss the possibilities for improvement, we consider a plot of all the possible pairs $(\dH(G, U), \dGH(G, U))$ for graphs $G$ and subsets $U$ where $\operatorname{Ext}(G, U) \nsubseteq \partial G$ (rescaled according to their systole):
        \begin{figure}[H]
            \centering
            \begin{tikzpicture}
                % \fill[orange!45] (0,0) -- (4,4) -- (4,0) -- cycle;
                % \fill[orange!22.5] (4,0) -- (4,4) -- (6.5,4) -- (6.5,0) -- cycle;
                \fill[pattern=north west lines, pattern color=gray!60]
                    (0,0) -- (0,6.5) -- (6.5,6.5) -- cycle;
                \fill[pattern=north west lines, pattern color=red]
                    (0,0) -- (2,2) -- (6.5,2) -- (6.5,0) -- cycle;
                \draw[->] (0,0) -- (6.5,0)
                    node[below] {$\dH(G, U)$};
                \draw[->] (0,0) -- (0,6.5)
                    node[left] {$\dGH(G, U)$};
                \draw (2,0.08) -- (2,-0.08)
                    node[below] {$\frac{1}{12}\sys(G)$};
                \draw (4,0.08) -- (4,-0.08)
                    node[below] {$\frac{1}{6}\sys(G)$};
                \draw (0.08,2) -- (-0.08,2)
                    node[left] {$\frac{1}{12}\sys(G)$};
                \draw (0.08,4) -- (-0.08,4)
                    node[left] {$\frac{1}{6}\sys(G)$};
                \draw[black, thick] (0,0) -- (6.5,6.5)
                    node[pos=0.82, above left, font=\small]
                    {$\dGH(G, U) = \dH(G, U)$};
                \draw[black, line width=2pt, line cap=round] (0,0) -- (2,2);
                \draw[blue, line width=2pt, line cap=round] (4,4) -- (6,4);
                \draw[red, thick, dashed] (2,2) -- (6.5,2);
            \end{tikzpicture}
            \caption{Visualizing Bounds Guaranteeing Equality}
            \label{fig:VisualizingBoundsForEquality}
        \end{figure}
        We have marked everything above the diagonal impossible (\patternswatch{north west lines}{gray!60}), as $\dGH(G, U) \leq \dH(G, U)$, by hatching it in gray.
        \\ \cref{theorem:main} now guarantees that points in the red hatched area (\patternswatch{north west lines}{red}) are also impossible.
        \\ A converse bound was given in Theorem 5.2 of \cite{AdamsMajhiManinVirkZavaMetricGraphs} for the unit circle $G = S^1$ by explicitly constructing a correspondence to certain subsets $U_\varepsilon \subseteq S^1$ that shows 
        $$ \dGH(S^1, U_\varepsilon) = \frac{\pi}{3} = \frac{1}{6} \sys(S^1) < \frac{\pi}{3} + \varepsilon = \dH(S^1, U_\varepsilon) \quad \text{ for } \quad \varepsilon \in \left(0, \frac{\pi}{6}\right] $$
        and we have marked the corresponding points in \cref{fig:VisualizingBoundsForEquality} blue (\colorswatch{blue}).
        \\ There are now two ways of improving the results of this paper: Concluding equality from $\dH(G, U) < c \cdot \sys(G)$ for $c \in \left(\frac{1}{12}, \frac{1}{6}\right]$ would prevent inequality to the left of a certain vertical line while concluding equality from $\dGH(G, U) < c \cdot \sys(G)$ would prevent inequality below a certain horizontal line. 
        \\ Given the blue line of examples of inequality and the red hatched area, where we can already guarantee equality, there must be values $c_{\operatorname{H}}, c_{\operatorname{GH}} \in \left[\frac{1}{12}, \frac{1}{6}\right]$ that are infimal with respect to the conditions that there are no inequalities for $\dH(G, U) < c_{\operatorname{H}} \sys(G)$ or $\dGH(G, U) < c_{\operatorname{GH}} \sys(G)$ respectively, bringing us to the question:
        \begin{question}
            What are the values $c_{\operatorname{H}}$ and $c_{\operatorname{GH}}$? 
            % Are they equal? What happens around $\dH(G, U) = c_{\operatorname{H}}\sys(G)$ or $\dGH(G, U) = c_{\operatorname{GH}} \sys(G)$?
        \end{question}
        % A related question would then be to ask:
        % \begin{question}
        %     What happens around $\dH(G, U) = c_{\operatorname{H}}\sys(G)$ or $\dGH(G, U) = c_{\operatorname{GH}} \sys(G)$?
        % \end{question}
        % Lastly, we chose the systole as a way to quantify the size of a graph as it was fitting for the approach taken in the proof. However, it is one of several approaches, leading us to ask:
        % \begin{question}
        %     Are there quantities that yield better or independent conditions for equality, as opposed to the systole?
        % \end{question}
    \clearpage
    \bibliographystyle{unsrt}
    \bibliography{sample}
\end{document}